\titleformat{\section}{\large\bfseries}{\thesection}{1em}{}
\titleformat{\subsection}{\normalsize\bfseries}{\thesubsection}{1em}{}
\def \dist {{\rm dist }}
\def \RR {\mathbb R}
\def \NN {\mathbb N}
\def \EE {\mathbb E}
\def \ZZ {\mathbb Z}
\def \TT {\mathbb T}
\def \eps {\varepsilon}
\def \essosc {\mathrm {essOsc}}
\def \osc {\mathrm{Osc}}
\newtheorem{theorem}{Theorem}
\newtheorem{lemma}[theorem]{Lemma}
\newtheorem{proposition}[theorem]{Proposition}
\def\myffrac#1#2 in #3{\raise 2.6pt\hbox{$#3 #1$}\mkern-1.5mu\raise 0.8pt\hbox{$
#3/$}\mkern-1.1mu\lower 1.5pt\hbox{$#3 #2$}}
\def\qed{\hfill $\vcenter{\hrule height .3mm
\hbox {\vrule width .3mm height 2.1mm \kern 2mm \vrule width .3mm
height 2.1mm} \hrule height .3mm}$ \bigskip}
\begin{document}

\title{Lipschitz functions on the infinite-dimensional torus}
\author{Dmitry Faifman\textsuperscript{1}  $\ $ and $\ $ Bo'az Klartag\textsuperscript{2}}
\date{}
\maketitle

\abstract{We discuss the spectrum phenomenon for  Lipschitz functions on the infinite-dimensional torus.
Suppose that $f$ is a measurable, real-valued, Lipschitz function on the torus $\mathbb{T}^{\infty}$.
We prove that there exists a number $a \in \RR$ with the following property: For any $\eps > 0$ there exists a parallel, infinite-dimensional subtorus
$M \subseteq \TT^{\infty}$ such that the restriction of the function $f-a$ to the subtorus $M$ has an $L^{\infty}(M)$-norm of at most $\eps$.}

\footnotetext[1]{Institut des Hautes \'Etudes Scientifiques (IHES), 35 Route de Chartres, 91440 Bures-sur-Yvette, France. Email: dfaifman@gmail.com}
\footnotetext[2]{School of Mathematical Sciences, Tel-Aviv
University, Tel Aviv 69978, Israel. Email: klartagb@tau.ac.il}

\section{Background and Results}

One of the most remarkable phenomena in high dimensions is the emergence of a {\it spectrum} for uniformly continuous functions.
It was shown by Milman in his proof of Dvoretzky's theorem \cite{mil_30} that given any $1$-Lipschitz function $f$ on the high-dimensional
sphere $S^n$, one may find a section of $S^n$ by a linear subspace of large dimension, on which $f$ is nearly a constant function. The value of this constant may be thought of, approximately, as an element in a spectrum associated with  $f$. An analogous effect in discrete mathematics is Ramsey's theorem \cite{alon},
 according to which any coloring of a large complete graph by a fixed number of colors contains a large induced subgraph which is monochromatic.

 \medskip  There have been several attempts to formulate infinite-dimensional analogs of the  Ramsey-–Dvoretzky–-Milman phenomenon.
 Let $X$ be an infinite-dimensional Banach space whose unit sphere is denoted by $S(X)$. For a function $f: S(X) \rightarrow \RR$ one defines its infinite-dimensional spectrum $\sigma(f)$
  as the collection of all values $a \in \RR$ with the following property: For any $\eps > 0$, there exists an infinite-dimensional subspace $Y \subseteq X$ such that
 $$ |f(v) - a| < \eps \qquad \qquad \qquad \text{for all} \ v \in S(Y), $$
 where $S(Y)$ is the unit sphere in the subspace $Y$. A question that was open for many years was
  whether
 the infinite-dimensional spectrum of any Lipschitz function is non-empty.  Unfortunately, even when $X$ is a Hilbert space, the answer is decisively
 negative as  was proven by Odell and Schlumprecht \cite{OS1, OS2}. On the positive side,  Gowers \cite{gowers} proved that any Lipschitz function on the Banach space $c_0$ admits a non-empty infinite-dimensional spectrum. The space $c_0$ is essentially the only separable Banach space for which the answer is positive, as was proven in \cite{OS1, OS2}.

 \medskip Here we investigate the question of existence of an infinite-dimensional spectrum in a different situation,
 that of the infinite-dimensional torus, or of infinite-dimensional product spaces in general. Lipschitz functions on a finite-dimensional torus were analyzed
  using probabilistic tools by Faifman, Klartag and Milman  \cite{FKM}. In this paper  we will exploit
 the fact that the infinite-dimensional torus admits a product probability measure,
which allows one to use probabilistic arguments akin to the finite-dimensional case.

\medskip {Let us introduce some terminology and notation and recall a few basic facts that are used throughout the paper.} The infinite-dimensional torus is typically denoted by $\TT^{\NN}$ or by $\TT^{\infty}$. An element $x \in \TT^{\infty}$
is a sequence $x = (x_i)_{i=1,2, \ldots}$ with $x_i \in \TT = \RR / \ZZ$ for all $i$.
Write $\sigma$ for the uniform probability measure on $\TT^{\infty}$, which is a complete product measure,
invariant under translations.
When we say that a random point $X$ is distributed uniformly on $\TT^{\infty}$
or when we say that a function $f$ on $\TT^{\infty}$ is measurable, we always refer to the probability measure $\sigma$.
For $x,y \in \TT^{\infty}$ consider the Euclidean metric
$$  \dist(x,y) = \sqrt{ \sum_{i=1}^{\infty} \dist^2(x_i, y_i) } $$
where $ x = (x_i)_{i \geq 1}, y = (y_i)_{i \geq 1}$ and where $\dist(x_i, y_i)$ is the distance between $x_i$ and $y_i$ in the circle $\RR / \ZZ$.
It may happen that $d(x,y) = +\infty$ for some $x,y \in \TT^{\infty}$. In fact, the torus $\TT^{\infty}$ is split into infinitely-many connected
components with respect to the metric $\dist$, all of  measure zero.
It is explained in Gromov \cite[Section 3.1]{gromov} that
for any measurable subsets $A, B \subseteq \TT^{\infty}$ with $\sigma(A) \cdot \sigma(B) > 0$,
$$ \inf_{x \in a, y \in B} \dist(x,y) \leq C( \sigma(A), \sigma(B) ) < \infty $$
for a certain explicit function $C: (0,1] \times (0,1] \rightarrow [0, \infty)$.
A subset  $M \subseteq \TT^{\infty}$ is
a {\it parallel infinite-dimensional subtorus} if there exist an infinite subset $A \subseteq \NN$ and values $b: \NN \setminus A \rightarrow \TT$ such that
$$ M = \left \{  (x_i)_{i \geq 1} \in \TT^{\infty} \, ; \, x_i = b_i \ \text{for all} \ i \in \NN \setminus A \right \}. $$
Note that the uniform probability measure on the infinite-dimensional subtorus $M$ is well-defined, thus one may speak of  the space $L^{\infty}(M)$. Our main result is the following:

\begin{theorem}
For any measurable function $f:\mathbb{T}^{\infty} \rightarrow \mathbb{R}$
that is Lipschitz with respect to the Euclidean metric $\dist$, there exists $a \in \RR$ with the following property:
For any $\eps > 0$ there exists a parallel infinite-dimensional subtorus
$M \subseteq \TT^{\infty}$ such that
$ \left \| f - a \right \|_{L^{\infty}(M)} < \eps$.
\label{thm1}
\end{theorem}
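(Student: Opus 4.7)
I will show that the essential oscillation $\essosc_M f := \mathrm{esssup}_M f - \mathrm{essinf}_M f$ can be made arbitrarily small over parallel infinite-dimensional subtori $M$. Once this is established, the common constant $a$ is extracted by compactness: the midpoints $a_\eps := \tfrac{1}{2}(\mathrm{esssup}_{M_\eps} f + \mathrm{essinf}_{M_\eps} f)$ stay in a bounded interval (after translating $f$ to center the essentially attained range using Gromov's estimate), and any accumulation point $a$ satisfies $\|f - a\|_{L^\infty(M_{\eps_n})} \leq \eps_n + |a - a_{\eps_n}| \to 0$ along a suitable subsequence; the set of such admissible $a$ is closed, so $a$ lies in it.

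To shrink the oscillation, I would first translate the Lipschitz hypothesis into weak coordinate dependence. Expanding $f = \sum_k \hat f(k)\, e^{2\pi i k \cdot x}$ on $\TT^\infty$, the Euclidean Lipschitz condition yields $\sum_k \|k\|_{\ell^2}^2 |\hat f(k)|^2 \leq (L/2\pi)^2$ by Parseval applied to $\nabla f$, and the elementary inequality $|\mathrm{supp}(k)| \leq \|k\|_{\ell^2}^2$ for $k\in\ZZ^{(\NN)}$ gives
$\sum_i \|D_i f\|_{L^2}^2 \leq (L/2\pi)^2$, where $D_i f := f - \EE[f\mid X_j : j\neq i]$. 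Hence for any $\delta > 0$ one can pick an infinite subset $A \subseteq \NN$ with $\sum_{i \in A}\|D_i f\|_{L^2}^2 < \delta$. Setting $\bar f_A := \EE[f\mid X_j : j \notin A]$, Parseval gives $\|f - \bar f_A\|_{L^2(\TT^\infty)} < \sqrt\delta$, and by Fubini the slice norm $\|f|_{M_{A,b}} - \bar f_A(b)\|_{L^2(M_{A,b})}$ is correspondingly small for a positive-measure set of $b \in \TT^{\NN \setminus A}$. Moreover, since $\bar f_A$ is continuous with mean $\EE[f]$, we can arrange $b$ so that $\bar f_A(b) = \EE[f]$.

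The main obstacle I foresee is upgrading this $L^2$-smallness to $L^\infty$-smallness on an infinite-dimensional sub-subtorus. Lipschitz functions on $\TT^\infty$ need not be essentially bounded, and Gromov's concentration estimate furnishes only sub-Gaussian tails, never pointwise bounds; so the bootstrap cannot proceed by concentration alone. My plan is to iterate the sparsification inside $M_{A,b} \cong \TT^\infty$, producing nested infinite-dimensional parallel subtori $M_0 \supset M_1 \supset \cdots$ and constants $c_k$ with $\|f - c_k\|_{L^2(M_k)} < 2^{-k}$, while a diagonal enumeration guarantees that infinitely many coordinates remain free in $M_\infty := \bigcap_k M_k$. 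The subtle step is ensuring that each sparsification drives down not only the $L^2$-deviation but also the Fourier $\ell^1$-mass of the non-constant modes on $M_k$ — bounded by $\sum_{k : \mathrm{supp}(k) \cap A_k \neq \emptyset}|\hat f(k)|$ after a suitable choice of $A_k$ — so that the Fourier series of $f|_{M_k} - c_k$ converges uniformly, giving $\|f - c_k\|_{L^\infty(M_k)} \to 0$ and hence $\|f - a\|_{L^\infty(M_\infty)} < \eps$ with $a = \lim_k c_k = \EE[f]$.
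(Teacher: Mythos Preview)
Your Fourier/Efron--Stein setup is correct and yields the $L^2$ statement cleanly: $\sum_i \|D_i f\|_{L^2}^2 \le (L/2\pi)^2$, so one can thin to an infinite coordinate set $A$ on which the $L^2$ deviation from the conditional mean is arbitrarily small. However, the step you yourself flag as the ``main obstacle'' is a genuine gap, and your proposed resolution does not work. You hope to force $\sum_{k:\,\mathrm{supp}(k)\cap A\neq\emptyset}|\hat f(k)|$ small, but the Lipschitz hypothesis gives only $\ell^2$-type control on the Fourier side; nothing prevents the $\ell^1$ Fourier mass from being infinite on \emph{every} parallel infinite-dimensional subtorus. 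Concretely, take a mean-zero Lipschitz $h:\TT\to\RR$ with $\sum_n|\hat h(n)|=\infty$ and set $f(x)=\sum_{i\ge 1} i^{-1}h(x_i)$; this $f$ is measurable and $\dist$-Lipschitz, yet for any infinite $A$ the restricted non-constant $\ell^1$ Fourier mass equals $(\sum_{i\in A} i^{-1})\cdot\sum_n|\hat h(n)|=\infty$. The correct smallness condition here is $\sum_{i\in A} i^{-1}$ small, which your criterion never sees. Relatedly, iterating $L^2$-smallness on nested $M_k$ gives no information on $M_\infty=\bigcap_k M_k$, since the measures on $M_k$ and $M_\infty$ are supported on different coordinate sets. (A minor side issue: $\bar f_A$ lives on the infinite-dimensional torus $\TT^{\NN\setminus A}$ and is Lipschitz there, but there is no reason for it to be continuous in the product topology, so the intermediate-value step for choosing $b$ is unjustified.)

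The paper takes a different route that bypasses the $L^2\to L^\infty$ bootstrap entirely. The engine is a Morrey-type inequality on $\TT^n$ (Lemma~\ref{Coordinate_Maurey}): a weighted $L^q$ bound on $(\partial_i f)$, with rapidly growing weights in $i$, forces the \emph{pointwise} oscillation of $f$ to be small. One then partitions $\NN$ into blocks whose sizes are tuned to these weights, picks one random coordinate per block, and uses the $\dist_p$-Lipschitz bound $\sum_{i\in B_n}|\partial_i f|^q\le 1$ together with Markov/Fubini to land in the hypothesis of the Morrey lemma on a positive-measure set of slices; Kolmogorov's zero-one law then promotes the finite-dimensional oscillation bound to an essential-oscillation bound on an infinite-dimensional subtorus. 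The nested-subtorus extraction of $a$ at the end is similar in spirit to yours, but the crucial point is that pointwise control is obtained \emph{before} passing to infinite dimensions, not after.
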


Theorem \ref{thm1} thus implies that any measurable, Lipschitz function on $\TT^{\infty}$ has a non-empty spectrum in an appropriate sense.
In order to have in mind some concrete examples
of measurable functions on $\TT^{\infty}$, we mention the function
\begin{equation} f(x)=\sum_{i=1}^\infty a_i \cos(2\pi x_i) \qquad \qquad \qquad (x \in \TT^{\infty}) \label{eq_409} \end{equation}
where $\cos(2 \pi x_i)$ is clearly well-defined for $x_i \in \TT = \RR / \ZZ$. By Kolmogorov's three-series theorem (see, e.g., Kahane \cite[Section 3]{kahane}),
 the series in (\ref{eq_409}) converges almost everywhere if and only if $\sum a_i^2<\infty$.
Assuming that indeed $\sum a_i^2 < \infty$, the function $f$ is a well-defined\footnote{Formally, $f$ is well-defined only almost everywhere with respect to $\sigma$.  For completeness, let us agree that
$f$ attains the value zero at the few points $x$ for which the series in (\ref{eq_409}) diverges.}, measurable
function on $\TT^{\infty}$ which is in fact Lipschitz with respect to
the Euclidean metric $\dist$.

\medskip We proceed to discuss the necessity of the assumptions of Theorem \ref{thm1}.
The condition of measurability is essential: Indeed, fixing a representative $x^C$ for each $\dist$-connected component $C$ of $\mathbb T^\infty$, and letting $f(x)=\inf_C\dist (x,x^C)$, we get a Lipschitz function, the restriction of which to every parallel, infinite-dimensional subtorus has arbitrarily large values.
An example of a measurable function which is non-Lipschitz and has an empty spectrum may be constructed as follows:
It is well-known that there exists a Borel subset $B \subset \RR$ such that both $B$ and $\RR \setminus B$ intersect any non-empty interval in a set
of a positive Lebesgue measure. Consider the set
$$ A = \left \{ (x_1,x_2,\ldots) \in \TT^{\infty} \, ; \, \sum_{i=1}^{\infty} \frac{
\cos(2 \pi x_i)}{i^2} \in B \right \}. $$
Then the indicator function $f = 1_A$ is a measurable function which has no spectrum.

\medskip In general, a measurable, $\dist$-Lipschitz function need not be continuous with respect to the usual product topology on $\TT^{\infty}$.
The function in (\ref{eq_409}) is continuous with respect to the product topology only under the stronger requirement that $\sum |a_i| < \infty$.
  For a function $f:\mathbb T^\infty \to\mathbb R$ that is continuous in the product topology, its image coincides with its spectrum.
  This is because every element of the basis of the topology contains a parallel infinite-dimensional subtorus of the form $M = \{ (x_i)_{i \geq 1} \in \TT^{\infty} \, ; \,
  x_i = b_i \ \forall i < N \}$.

\medskip In addition to the Euclidean metric $\dist$, one defines for $1 \leq p \leq \infty$ and $x, y \in \TT^{\infty}$ the distance $\dist_p$ by
\begin{equation} \dist_{p}(x,y) = \left( \sum_{i=1}^{\infty} \dist^p(x_i, y_i) \right)^{1/p}, \qquad \qquad \qquad \label{eq_450} \end{equation}
where the case $p = \infty$ is defined by $\dist_{\infty}(x,y) = \sup_{i\geq 1} \dist(x_i, y_i)$.
 Theorem \ref{thm1} is the case $p = 2$ of the following:

\begin{theorem}
For any $1 < p \leq \infty$ and a measurable function $f:\mathbb{T}^{\infty} \rightarrow \mathbb{R}$
which is Lipschitz with respect to the metric $\dist_p$, there exists $a \in \RR$ with the following property:
For any $\eps > 0$ there exists a parallel infinite-dimensional subtorus
$M \subseteq \TT^{\infty}$ such that
$ \left \| f - a \right \|_{L^{\infty}(M)} < \eps$.
\label{thm2}
\end{theorem}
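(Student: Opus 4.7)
The plan is to split according to whether $p \ge 2$ or $1 < p < 2$, reducing the former case to Theorem \ref{thm1} and adapting its argument for the latter.

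The elementary comparison of $\ell^p$-norms gives $\|a\|_p \le \|a\|_2$ for every real sequence $a$ whenever $p \ge 2$, hence the pointwise inequality $\dist_p \le \dist$ on $\TT^{\infty}$. Consequently any function that is $\dist_p$-Lipschitz with constant $L$ is automatically $\dist$-Lipschitz with the same constant, and Theorem \ref{thm1} applies verbatim in the range $p \ge 2$, producing the same spectral value $a$ and the same family of subtori. This already covers the endpoint $p = \infty$.

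In the range $1 < p < 2$ one has instead $\dist_p \ge \dist$, so the $\dist_p$-Lipschitz hypothesis is strictly weaker than the $\dist$-Lipschitz one and no direct reduction is available. My plan here is to revisit the proof of Theorem \ref{thm1} and replace $\dist$ by $\dist_p$ throughout. The specific value $p=2$ should enter non-trivially only at Gromov's estimate \cite[Section 3.1]{gromov}: for measurable $A, B \subseteq \TT^{\infty}$ with $\sigma(A)\sigma(B) > 0$, one has $\dist(A,B) \le C(\sigma(A), \sigma(B)) < \infty$. I would establish the $\dist_p$-analog
\[
\dist_p(A, B) \;\le\; C_p\bigl(\sigma(A), \sigma(B)\bigr) \;<\; \infty
\qquad \text{for every } 1 < p \le \infty,
\]
after which the remaining ingredients of the $p=2$ proof -- the selection of the spectral value $a$, the coordinate-by-coordinate construction of an infinite-dimensional subtorus $M$, and the final upgrade from near-constancy in measure to the claimed $L^{\infty}$-bound via the Lipschitz property -- should transfer line by line.

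I expect the main obstacle to be this $\dist_p$-Gromov inequality for $1 < p < 2$, where $\dist_p$ exceeds $\dist_2$ and concentration is correspondingly harder. I would follow Gromov's own scheme: given $A, B \subseteq \TT^{\infty}$ of positive measure, couple two random points $X \in A$ and $Y \in B$ by adjusting finitely many coordinates at a time using a finite-dimensional torus isoperimetric inequality, and then bound the $p$-th moments of the resulting coordinate-wise differences. The hypothesis $p > 1$ should enter at precisely the step where a Kolmogorov three-series criterion is invoked to ensure almost-sure $\ell^p$-summability of the differences, which is also the natural place at which the argument breaks down at $p = 1$, consistently with the paper's restriction to $p > 1$.
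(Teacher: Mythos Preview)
Your proposal rests on a misreading of the paper's structure. Gromov's estimate $\dist(A,B) \le C(\sigma(A),\sigma(B))$ is mentioned in the introduction only as motivating background; it plays no role whatsoever in the proof of Theorem~\ref{thm1} or Theorem~\ref{thm2}. In fact the paper does not prove Theorem~\ref{thm1} separately at all: Theorem~\ref{thm1} is simply the case $p=2$ of Theorem~\ref{thm2}, and the argument in Section~2 treats every $p \in (1,\infty]$ uniformly from the outset. Thus your reduction of the range $p \ge 2$ to Theorem~\ref{thm1} is circular within this paper, and your plan for $1 < p < 2$ is aimed at replacing an ingredient that is never actually used.

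The actual machinery is quite different. The paper proves a Morrey-type inequality (Lemma~\ref{Coordinate_Maurey}) bounding the oscillation of a $\dist_p$-Lipschitz function on $\TT^n$ by a weighted $L^q$-integral of its partial derivatives, with $q = p/(p-1)$; this is where $p$ genuinely enters, via Rademacher's theorem and the pointwise bound $\sum_i |\partial f/\partial x_i|^q \le 1$. This is lifted to $\TT^\infty$ (Lemma~\ref{lem_548}) using Kolmogorov's zero--one law rather than any Gromov-type distance estimate. Proposition~\ref{prop_1218} then selects one coordinate at random from each of a sequence of increasingly large blocks so that the weighted derivative integral is small in expectation, producing a subtorus of small essential oscillation; the hypothesis $p>1$ enters only to ensure $q<\infty$, so that block-averaging yields the factor $1/\#(B_n)$ in (\ref{eq_1053}). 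No three-series criterion appears anywhere. Your outline --- coupling random points of $A$ and $B$, finite-dimensional isoperimetry, a three-series argument --- does not correspond to any step of the proof, and even granting a $\dist_p$-Gromov inequality you have not explained how a bound on $\dist_p(A,B)$ would yield an infinite-dimensional \emph{subtorus} on which $f$ is nearly constant.
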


It is only the product structure of $\TT^{\infty}$ that plays a fundamental role in the  proof of Theorem \ref{thm2} given below.
For instance, one may replace
 the infinite product of circles $\TT^{\infty}$ by the infinite-dimensional cube $[0,1]^{\infty}$, or more generally, by an infinite product of the form
 $$ X = X_1 \times X_2 \times \ldots $$
 where $X_1,X_2,\ldots$ are connected Riemannian manifolds with boundary, all of volume one, that have a ``uniformly bounded geometry''. By the last phrase
 we mean that the dimensions, diameters and sectional curvatures of the $X_i$'s should
  all be uniformly bounded. The distance function $\dist_p$ on $X$ is still given by (\ref{eq_450}).
  For concreteness, we provide the statement and proof only for the toric case.
  We believe that the adaptation of our proof of
  Theorem \ref{thm2}
   to the cube $[0,1]^{\infty}$ or to the case of a more general product space is rather straightforward.

\medskip We are not sure whether the conclusion of Theorem \ref{thm2} holds true also for $p=1$.
It could be interesting to investigate whether for $p = \infty$, the essential supremum 
in the conclusion of Theorem \ref{thm2} may be replaced by a supremum.
Let us also comment that the full axiom of choice is not used in the proof of Theorem \ref{thm2},
and that the axiom of dependent choice suffices for our argument.

\medskip {\it Acknowledgements.}
The authors would like to thank Ronen Eldan and Vitali Milman for their interest in this work and for related discussions.
The second named author was supported by a grant from the European Research Council.

\section{Proofs}

Consider the $n$-dimensional torus $\TT^n = \RR^n / \ZZ^n$.
The coordinate vector fields $\frac{\partial}{\partial x_1},\ldots,
 \frac{\partial}{\partial x_n}$ are well-defined on the torus $\TT^n$.
 The metric $\dist_p$  on the finite-dimensional torus $\TT^n$ is defined via
  a formula analogous to (\ref{eq_450}) in which the sum runs only up to $n$.
  For a function $f: \TT^n \rightarrow \RR$ we define its oscillation via
  $$ \osc(f ; \TT^n) = \sup_{\TT^n} f - \inf_{\TT^n} f = \sup_{x,y \in \TT^n} |f(x) - f(y)|. $$
 According to the Rademacher theorem from real analysis,
 any function on $\TT^{n}$ which is Lipschitz with respect to $\dist_p$,  for some $1 \leq p \leq \infty$, is differentiable
 almost-everywhere.
  Let $\omega_{n,p}$ denote the $n$-dimensional volume of the $\ell_p$-ball $B_p^n = \{ x \in \RR^n \, ; \, \sum_{i=1}^n |x_i|^p \leq 1 \}$.
  In this note, all integrals on tori and subtori are
carried out with respect to the the uniform probability measure on the  torus. We will need the following variant of Morrey's inequality:

\begin{lemma}
\label{Coordinate_Maurey} Let $n \geq 1,p \in (1, \infty], 0 < \eps < 1/2$ and let $f:\mathbb{T}^{n}\to\mathbb{R}$ be
$1$-Lipschitz with respect to the metric $\dist_p$. Denote $q = p / (p-1)$, with $q = 1$ in case $p = \infty$. Assume that
\begin{equation}
\int_{\mathbb{T}^{n}}\sum_{i=0}^{n-1}
\frac{2^{i^2+qi}}{\omega_{i,p}\eps^{i+q}}\cdot \Bigg|\frac{\partial f}{\partial x_{i+1}}\Bigg|^{q}\leq1.
\label{eq_950}
\end{equation}
Then $\osc(f;\mathbb{T}^{n})<8\eps$. \end{lemma}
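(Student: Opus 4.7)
The plan is to argue by iteration, integrating out one coordinate at a time. Set $g_n := f$ and recursively $g_{m-1}(y) := \int_\TT g_m(y,t)\,dt$ for $y \in \TT^{m-1}$ and $m = n, n-1, \ldots, 2$. Jensen's inequality gives $\int_{\TT^{m-1}} |\partial_i g_{m-1}|^q \leq \int_{\TT^m} |\partial_i g_m|^q$ for $i < m$, so each $g_m$ inherits \eqref{eq_950} on $\TT^m$ (with the same $\eps$) and remains $1$-Lipschitz in $\dist_p$. The base case $m=1$ is immediate: the hypothesis on $g_1:\TT\to\RR$ reads $\int_\TT |g_1'|^q \leq \eps^q$, so H\"older on the probability space gives $\|g_1'\|_1 \leq \eps$; since any loop on the circle has total variation at least $2\osc(g_1)$, we conclude $\osc(g_1;\TT) \leq \eps/2$.

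For $m \geq 2$, the heart of the argument is a good-slice estimate comparing $g_m$ to $g_{m-1}\circ\pi$, with $\pi(y,t)=y$. With thresholds $\eps_m, \delta_m > 0$ to be chosen, set
$$ A_m := \{y \in \TT^{m-1} : \|\partial_m g_m(y,\cdot)\|_{L^q(\TT)} \leq 2\eps_m\}. $$
The $i=m-1$ summand in \eqref{eq_950} forces $\int_{\TT^m}|\partial_m g_m|^q \leq \omega_{m-1,p}\eps^{m-1+q}/2^{(m-1)^2+q(m-1)}$, so Chebyshev's inequality yields
$$ \sigma(A_m^c) \leq \frac{\omega_{m-1,p}\,\eps^{m-1+q}}{2^{(m-1)^2+q(m-1)}\,(2\eps_m)^q}. $$
Choosing $\eps_m := \eps/2^m$ and $\delta_m := \eps/2^{m-1}$ reduces this to $\sigma(A_m^c) \leq \omega_{m-1,p}\,\delta_m^{m-1}$. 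Since every $\dist_p$-ball of radius $\delta_m < 1/2$ in $\TT^{m-1}$ has measure exactly $\omega_{m-1,p}\,\delta_m^{m-1}$, this in turn implies that $A_m$ is $\delta_m$-dense in $(\TT^{m-1},\dist_p)$.

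Given $(y,t)\in\TT^m$, pick $y'\in A_m$ with $\dist_p(y,y')\leq\delta_m$. The $1$-Lipschitz property gives $|g_m(y,t)-g_m(y',t)|\leq\delta_m$, and the one-dimensional estimate of the base case applied at $y'$ gives $\osc(g_m(y',\cdot);\TT) \leq \|\partial_m g_m(y',\cdot)\|_1/2 \leq \|\partial_m g_m(y',\cdot)\|_q/2 \leq \eps_m$, hence $|g_m(y',t)-g_{m-1}(y')|\leq \eps_m$. Combining, $\osc(g_m;\TT^m) \leq \osc(g_{m-1};\TT^{m-1}) + 2(\delta_m+\eps_m)$; telescoping from $m=n$ down to the base case $m=1$ produces
$$ \osc(f;\TT^n) \leq \eps/2 + \sum_{m=2}^n 2\,(\eps/2^{m-1}+\eps/2^m) \leq \eps/2 + 3\eps < 8\eps. $$
The main obstacle is calibrating the thresholds $(\eps_m,\delta_m)$ so that Chebyshev's inequality and the density estimate simultaneously hold at each level while the sum $\sum_m 2(\delta_m+\eps_m)$ stays uniformly bounded in $n$; this is made possible precisely by the super-exponential growth $2^{i^2+qi}$ of the coefficients in \eqref{eq_950}.
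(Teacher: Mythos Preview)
Your argument is correct (up to a cosmetic point noted below) and reaches the same conclusion, but the mechanism differs from the paper's proof. Both proofs ``peel off'' one coordinate at a time; the difference is in how a single step is controlled. The paper fixes a point $P\in\TT^n$ and builds a random chain $P=P_n,P'_{n-1},P_{n-1},\ldots,P_0$ where $P'_i$ is uniform in the $\dist_p$-ball of radius $\eps/2^i$ inside the subtorus $P_{i+1}+E_i$, and $P_i$ is then uniform on the circle $P'_i+\ell_{i+1}$. The cost of the ball-jump is bounded by the Lipschitz constant, and the cost of the circle-randomization is bounded by $\EE\int_{\ell_{i+1}}|\partial_{i+1}f|$, which after dividing by the ball volume $\omega_{i,p}(\eps/2^i)^i$ and applying H\"older yields exactly the weight $2^{i^2+qi}/(\omega_{i,p}\eps^{i+q})$. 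Since $P_0$ ends up uniform on $\TT^n$, this bounds $|f(P)-\int_{\TT^n}f|$ for every $P$. Your route is instead deterministic: you form the conditional averages $g_m$ and, at each level, use Chebyshev's inequality to locate a set $A_m$ of ``good'' base points whose complement is so small that every $\dist_p$-ball of radius $\delta_m$ must meet $A_m$; the weights $2^{i^2+qi}/(\omega_{i,p}\eps^{i+q})$ enter for you as the balance between the Chebyshev threshold and the ball volume. The paper's coupling is slicker and gives the constant $8\eps$ directly as $2\cdot 4\eps$, while your approach is more elementary (no auxiliary random construction) and in fact lands at $3.5\eps$, well under the stated $8\eps$.

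One small point: from $\sigma(A_m^c)\leq \omega_{m-1,p}\,\delta_m^{m-1}$ you cannot quite conclude that $A_m$ is $\delta_m$-dense, since equality would allow $A_m^c$ to swallow an entire closed ball. What you do get is that every ball of any radius $\delta>\delta_m$ meets $A_m$ in positive measure, so you may take $y'\in A_m$ with $\dist_p(y,y')\leq\delta_m+\eta$ for arbitrary $\eta>0$; the slack $8\eps-3.5\eps$ absorbs this with no trouble.
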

\begin{proof}
Let $\pi: \RR^n \rightarrow \TT^n = \RR^n / \ZZ^n$ be the quotient map.   For a point $x \in \TT^n$ we denote
$$ \ell_i = \pi \left ( \RR e_i \right ), \qquad \qquad E_i = \pi \left ( Sp(e_{1},...,e_{i}) \right ), $$
where $e_1,\ldots,e_n$ are the standard unit vectors in $\RR^n$ and where $Sp(e_{1},...,e_{i})$ is the subspace spanned
by $e_1,\ldots,e_i$. We also denote $x + A = \{ x + y \, ; \, y \in A \}$ for a subset $A \subseteq \TT^n$ and a point $x \in \TT^n$.
 Thus, $x + \ell_i \subseteq \TT^n$ is a one-dimensional torus in $\TT^n$ passing through $x$ in the direction of
$\partial / \partial x_i$. The subtorus $x + E_i \subseteq \TT^n$ is $i$-dimensional, and the vector fields $\partial / \partial x_1,\ldots,
 \partial / \partial x_i$ are tangent to $x + E_i$ at the point $x$.

\medskip  Fix a point $P\in\mathbb{T}^{n}$.
For a decreasing index $i=n,\ldots,0$, we recursively define the random points $P_i, P'_i \in \TT^n$
via the following rules:
\begin{enumerate}
\item[(i)] $\displaystyle P_n = P$.
\item[(ii)] The point $P'_i$ is distributed uniformly in the $i$-dimensional ball $B_p^{i}(P_{i+1},\frac{\eps}{2^{i}})$,
where $B_p^{i}(P_{i+1},\frac{\eps}{2^{i}})$ is the $\dist_p$-ball in the subtorus $P_{i+1} + E_i$
centered at $P_{i+1}$ of radius $\frac{\eps}{2^{i}}$.
\item[(iii)] The point $P_i$ is distributed uniformly in the $1$-dimensional subtorus $P_i' + \ell_{i+1}$.
\end{enumerate}
Note that our recursive definition has a decreasing index, thus we first define $P_n$, then $P'_{n-1}$, then $P_{n-1}$, etc.
Since $f$ is Lipschitz, for $i=0,\ldots,n-1$,
\begin{equation}
\mathbb{E}|f(P_{i}')-f(P_{i})| \leq \mathbb{E} \int_{P_i^{\prime} + \ell_{i+1}} \left| \frac{\partial f}{\partial x_{i+1}} \right|
= \mathbb{E} \left| \frac{\partial f}{\partial x_{i+1}} (P_i) \right|. \label{eq_856}
\end{equation}
By an inductive argument, we see that the last $n-i$
coordinates of the random point $P_i$ are independent random variables that are distributed uniformly over the circle $\TT$.
Let $A_{i+1} \in \TT^i$ be the vector which consists of the first $i$ coordinates of $P_{i+1}$.
We also write $B_p^i(A_{i+1}, r)$ for the $\dist_p$-ball of radius $r$ centered at $A_{i+1}$ in the torus $\TT^i$.
Since $\eps < 1/2$,
\begin{equation}
\mathbb{E} \left| \frac{\partial f}{\partial x_{i+1}} (P_i) \right|
= \mathbb{E} \frac{\int_{ B_p^{i}(A_{i+1},\frac{\eps}{2^{i}}) \times \mathbb{T}^{n-i}} \left|\frac{\partial f}{\partial x_{i+1}} \right|
}{{\rm Vol}_i \left( B_p^{i} \left(A_{i+1},\frac{\eps}{2^{i}} \right) \right)}  =
 \frac{\mathbb{E} \int_{B_p^{i}(A_{i+1},\frac{\eps}{2^{i}}) \times \mathbb{T}^{n-i}} \left|\frac{\partial f}{\partial x_{i+1}} \right|
}{\omega_{i,p}\left(\frac{\eps}{2^{i}}\right)^{i}}. \label{eq_916}
\end{equation}
From (\ref{eq_856}), (\ref{eq_916}) and the H\"older inequality, for $i=0,\ldots,n-1$,
\begin{align} \nonumber
\mathbb{E}|f(P_{i}')-f(P_{i})| & \leq \left(\int_{\mathbb{T}^{n}}\Bigg|\frac{\partial f}{\partial x_{i+1}}\Bigg|^{q}\right)^{\frac{1}{q}}\left(\omega_{i,p}\left(\frac{\eps}{2^{i}}\right)^{i}\right)^{-\frac{1}{q}} \\ &
\leq \left(\frac{2^{i^{2}+qi}}{\omega_{i,p}\eps^{i+q}}\right)^{-\frac{1}{q}}\left(\frac{\omega_{i,p}\eps}{2^{i^{2}}}^{i}\right)^{-\frac{1}{q}}=\frac{\eps}{2^i},
\label{eq_921} \nonumber
\end{align}
where we used our assumption (\ref{eq_950}) in the last passage. The function $f$ is $1$-Lipschitz with respect to $\dist_p$, and hence
$|f(P_{i}')-f(P_{i+1})| \leq \eps / 2^i$ with probability one. Consequently,
$$ \EE |f(P) - f(P_0)| \leq \EE \sum_{i=0}^{n-1}
|f(P_{i})-f(P'_{i})| + |f(P_{i}')-f(P_{i+1})| \leq \sum_{i=0}^{n-1} \frac{2 \eps}{2^i} < 4 \eps.
$$
However, $P_0$ is distributed uniformly on the torus $\TT^n$. Denote $M = \EE f(P_0) = \int_{\TT^n} f$.
We have shown that $|f(P) - M| < 4 \eps$. Since $P \in \TT^n$ was an arbitrary fixed point, the lemma
follows.
\end{proof}
For $x\in\mathbb T^\infty$, denote by $F(x)$ the set of points in $\TT^{\infty}$ that coincide with $x$ in all but finitely many coordinates.

\begin{lemma}
\label{lem:Kolmogorov}
Let $A \subseteq \TT^{\infty}$ satisfy $\sigma(A) > 0$. Then $\sigma \left( \{x \in \TT^{\infty} \, ; \,  F(x)\cap A\neq\emptyset\} \right)=1$.\end{lemma}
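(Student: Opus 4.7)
The plan is to recognize the set $B = \{x \in \TT^\infty : F(x) \cap A \neq \emptyset\}$ as (essentially) an event in the tail $\sigma$-algebra of the independent coordinates $(x_i)_{i \geq 1}$. Indeed, if $y$ differs from $x$ in only finitely many coordinates then $F(y) = F(x)$, so membership in $B$ is preserved under arbitrary finite coordinate modifications. Since $A \subseteq B$ we already have $\sigma(B) > 0$, and a Kolmogorov zero-one argument should then force $\sigma(B) = 1$.

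The one subtlety is measurability: writing $B = \bigcup_n \{x : \exists v \in \TT^n,\, (v, x_{n+1}, x_{n+2}, \ldots) \in A\}$, each term is the projection of $A$ along the first $n$ coordinates, which need not even be Borel when $A$ is. I would sidestep this by exhibiting a measurable subset of $B$ of full measure. Let $\mathcal{F}_n$ denote the $\sigma$-algebra generated by $x_{n+1}, x_{n+2}, \ldots$, and set $M_n = \EE[1_A \mid \mathcal{F}_n]$. By Fubini, $M_n(x)$ equals the $\TT^n$-measure of the section $\{v \in \TT^n : (v, x_{n+1}, x_{n+2}, \ldots) \in A\}$.

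The process $(M_n)_{n \geq 1}$ is a reverse martingale for the decreasing filtration $\mathcal{F}_1 \supseteq \mathcal{F}_2 \supseteq \cdots$. L\'evy's reverse martingale convergence theorem yields $M_n \to \EE[1_A \mid \mathcal{F}_\infty]$ almost surely, where $\mathcal{F}_\infty = \bigcap_n \mathcal{F}_n$ is the tail $\sigma$-algebra of the independent sequence $(x_i)$. Kolmogorov's zero-one law then identifies $\mathcal{F}_\infty$ as $\sigma$-trivial, so the limit is the constant $\sigma(A) > 0$.

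Consequently, for $\sigma$-almost every $x$ the displayed section has positive $\TT^n$-measure, and in particular is non-empty, for all sufficiently large $n$; any $v$ in such a section provides a point of $F(x) \cap A$. The set of such $x$ is $\bigcup_n \{M_n > 0\}$, which is $\mathcal{F}_1$-measurable and of full measure, so by completeness of $\sigma$ we conclude $\sigma(B) = 1$. The only real obstacle is the potential non-measurability of $B$ itself, and it is bypassed cleanly by passing to the $\mathcal{F}_n$-measurable conditional expectations instead of the projections.
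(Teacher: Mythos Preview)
Your argument is correct and shares the paper's core idea: $B$ is invariant under finite coordinate changes, contains $A$, and Kolmogorov's zero--one law forces its measure to be $1$. The paper's proof is a two-line application of exactly this, simply asserting that $B$ is measurable and a tail event.

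Where you differ is in taking the measurability issue seriously. The paper declares $B$ measurable without comment; strictly speaking this requires knowing that projections of Borel sets are analytic and hence universally measurable (together with the completeness of $\sigma$), which the paper does not spell out. You sidestep this entirely: rather than proving $B$ itself is measurable, you exhibit the explicit $\mathcal{F}_1$-measurable full-measure subset $\bigcup_n \{M_n > 0\}$ inside $B$, using reverse martingale convergence to see that $M_n \to \sigma(A) > 0$ almost surely. This is a genuinely cleaner route for the measurability point, at the cost of invoking L\'evy's downward theorem. One small imprecision: the set of $x$ for which the section is eventually positive is $\bigcup_N \bigcap_{n \ge N} \{M_n > 0\}$, not $\bigcup_n \{M_n > 0\}$; but either set is measurable, of full measure, and contained in $B$, so the conclusion is unaffected.
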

\begin{proof}
Denote $B=\{x \in \TT^{\infty} \, ; \, F(x)\cap A\neq\emptyset\}$. Then $B$ is a measurable set which is
in fact a tail event. Since $A \subseteq B$,  Kolmogorov's zero-one law implies that $\sigma(B)=1$.\end{proof}

 For a measure space $X$ and a measurable function $f:X\to\mathbb R$, we define the essential supremum of $f$, denoted by $\mathrm{ess}\sup f$, as the supremum over all $a \in \RR$ for which the set $\{ x \in X \, ; \, f(x) > a \}$ has a non-zero measure. The definition of essential infimum is analogous.
 Define the {\it essential oscillation} of $f$ on $X$ by $$ \essosc(f;X)=\mathrm{ess}\sup f-\mathrm{ess}\inf f. $$
 Equivalently, $\essosc(f;X)=\|f(x)-f(y)\|_{L^\infty(X\times X)}$.

\begin{lemma} Let $p \in (1, \infty], 0 < \eps < 1/2$ and let $f:\mathbb{T}^{\infty}\to\mathbb{R}$ be
$1$-Lipschitz with respect to the metric $\dist_p$. Denote $q = p / (p-1)$, with $q = 1$ in case $p = \infty$. Assume that
$$
\int_{\mathbb{T}^{\infty}}\sum_{i=1}^{\infty}
c_{\eps, p, i} \cdot \Bigg|\frac{\partial f}{\partial x_{i}}\Bigg|^{q} \leq \frac{1}{2},
$$where $c_{\eps, p, i} =  2^{(i-1)^2+q (i-1)} / \left( \omega_{i-1,p} \cdot \eps^{i-1+q} \right)$.
Then
\begin{equation}
\essosc(f,\mathbb{T}^{\infty})< 8 \eps \label{eq_1156}.
\end{equation} \label{lem_548}
\end{lemma}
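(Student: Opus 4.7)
The plan is to reduce to the finite-dimensional Lemma~\ref{Coordinate_Maurey} by slicing, and then to use Lemma~\ref{lem:Kolmogorov} to transfer the slice oscillation bound to an essential oscillation bound on $\TT^\infty$. The factor $1/2$ in the hypothesis, rather than the $1$ appearing in the finite-dimensional lemma, is what will supply the slack needed for the \emph{strict} bound $<8\eps$.

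First I would fix any $c\in(1/2,1)$ and any $n\ge 1$. For each tail $\bar x=(x_{n+1},x_{n+2},\ldots)$ set $f_{\bar x}(y):=f(y,\bar x)$ on $\TT^n$; since $f$ is $1$-Lipschitz with respect to $\dist_p$ and the tail coordinates of $(y,\bar x)$ and $(y',\bar x)$ cancel in $\dist_p$, $f_{\bar x}$ is $1$-Lipschitz as well. Fubini together with monotone convergence rewrites the hypothesis as
\[ \EE_{\bar x}\!\left[\sum_{i=1}^n c_{\eps,p,i}\int_{\TT^n}|\partial_i f_{\bar x}|^q\right] = \sum_{i=1}^n c_{\eps,p,i}\int_{\TT^\infty}|\partial_i f|^q \le 1/2, \]
so Markov's inequality yields $\sigma(T_n^{(c)})\ge 1-1/(2c) > 0$, where $T_n^{(c)} := \{\bar x : \sum_{i=1}^n c_{\eps,p,i}\int_{\TT^n}|\partial_i f_{\bar x}|^q \le c\}$. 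For any $\bar x\in T_n^{(c)}$ each summand is $\le c$, hence $\int|\partial_i f_{\bar x}|^q\le c/c_{\eps,p,i}$. Rerunning the proof of Lemma~\ref{Coordinate_Maurey} on $f_{\bar x}$ verbatim but with this improved per-term bound produces $\EE|f_{\bar x}(P_i')-f_{\bar x}(P_i)|\le c^{1/q}\eps/2^i$ at the H\"older step, and summing gives
\[ \osc(f_{\bar x};\TT^n) \le 2(c^{1/q}+1)(2-2^{1-n})\eps \le K(c), \qquad K(c):=4(c^{1/q}+1)\eps. \]
Since $c<1$ and $q<\infty$ (because $p>1$), we obtain the uniform inequality $K(c)<8\eps$.

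Next I would suppose for contradiction that $\essosc(f;\TT^\infty) > K(c)$. Choose $\delta>0$ with $\essosc(f)-2\delta > K(c)$, and set $A=\{f<\mathrm{ess}\inf f+\delta\}$ and $B=\{f>\mathrm{ess}\sup f-\delta\}$; both have positive $\sigma$-measure. Writing $\pi_n$ for the projection onto the tail coordinates $i>n$, put $S_n':=\pi_n(A)\cap\pi_n(B)$; this is precisely the set of tails whose slice meets both $A$ and $B$, and it depends only on the tail coordinates. Lemma~\ref{lem:Kolmogorov} applied to $A$ gives $\sigma(\{x:F(x)\cap A\ne\emptyset\})=1$, and since this event equals the increasing union $\bigcup_n \pi_n^{-1}(\pi_n(A))$ we obtain $\sigma(\pi_n(A))\to 1$; likewise $\sigma(\pi_n(B))\to 1$, whence $\sigma(S_n')\to 1$ as $n\to\infty$.

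Finally, I would choose $n$ large enough that $\sigma(S_n') > 1/(2c)$. Then $\sigma(T_n^{(c)})+\sigma(S_n') > 1$, so $T_n^{(c)}\cap S_n'$ has positive measure; fix any $\bar x$ in this intersection. The slice bound gives $\osc(f_{\bar x};\TT^n)\le K(c)$, while the slice meets both $A$ and $B$ and so $\osc(f_{\bar x};\TT^n)\ge \essosc(f)-2\delta > K(c)$, a contradiction. Therefore $\essosc(f;\TT^\infty)\le K(c) < 8\eps$. The main obstacle is precisely the strict inequality: the naive choice $c=1$ (matching the finite-dimensional lemma exactly) yields only the slice bound $8\eps(1-2^{-n})$, which tends to $8\eps$ as the slice dimension grows and so cannot rule out $\essosc(f)=8\eps$; the $1/2$ slack in the hypothesis is exactly what permits $c<1$ and the resulting \emph{uniform} strict bound $K(c)<8\eps$.
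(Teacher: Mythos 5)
Your proof is correct and runs on the same skeleton as the paper's: Fubini plus Markov's inequality to control the slice hypothesis, Lemma~\ref{Coordinate_Maurey} for the finite-dimensional oscillation bound, and Lemma~\ref{lem:Kolmogorov} to pass from slices to the essential oscillation on $\TT^\infty$. The genuine difference is how strictness is extracted. The paper forms the nested sets $B_n$ of tails whose $n$-slice oscillates by less than $8\eps$, intersects them to a set $B$ of measure at least $1/2$, and concludes by asserting that $\essosc(f)\geq 8\eps$ yields positive-measure sets $C,D$ with $|f(c)-f(d)|\geq 8\eps$ for every $c\in C$, $d\in D$; this last step is delicate precisely in the borderline case $\essosc(f)=8\eps$, since the extreme level sets of $f$ may be null, and the quantitative slice bound $8\eps(1-2^{-n})$ degrades to $8\eps$ as $n\to\infty$. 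Your device of choosing a Markov threshold $c\in(1/2,1)$ (made possible by the factor $1/2$ in the hypothesis) yields a uniform-in-$n$ slice bound $K(c)=4(1+c^{1/q})\eps<8\eps$ and sidesteps this issue cleanly; it replaces the paper's countable intersection $\bigcap_n B_n$ with a single sufficiently large $n$, using $\sigma(\pi_n(A)),\sigma(\pi_n(B))\to 1$, which indeed follows from Lemma~\ref{lem:Kolmogorov} together with monotone convergence applied to $\bigcup_n\pi_n^{-1}(\pi_n(A))$. The one caveat you share with the paper is the measurability of the projections $\pi_n(A)$, $\pi_n(B)$ (respectively $\tilde C$, $\tilde D$ in the paper); both arguments survive by working with outer measure, since $T_n^{(c)}$ is genuinely measurable and a measurable set of measure greater than $1/(2c)$ cannot be disjoint from it. Your calculation $\mathbb{E}|f(P_i')-f(P_i)|\leq c^{1/q}\eps/2^i$ from the improved per-term bound $\int|\partial_{i+1}f_{\bar x}|^q\leq c/c_{\eps,p,i+1}$ is exactly right, and the observation that $q<\infty$ (from $p>1$) is what makes $c^{1/q}<1$ and hence $K(c)<8\eps$.
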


\begin{proof}
Let $a$ be a random point, distributed uniformly in $\TT^{\NN}$.
For a subset $S \subseteq \NN$, denote by $a_S$ the restriction of the random point $a$ to the torus
$\mathbb{T}^{S}$. Define $I_n = \{ 1, \ldots,n \}$. For $b \in \TT^{\NN \setminus I_n}$  denote
$$ \TT^n \times \{ b \} = \left \{ x \in \TT^{\infty} \, ; \, x_i = b_i \ \forall i > n \right \}. $$
For $n \geq 1$ we have
\begin{align*}
\mathbb{P}\left(  \int_{\mathbb{T}^{{n}} \times \{ a_{\mathbb{N}\setminus I_{n}} \}}\sum_{i=1}^{n}c_{\eps, p, i}\left| \frac{\partial f}{\partial x_{{i}}} \right|^{q}\geq1\right) & \leq
\mathbb{E}_{a} \int_{\mathbb{T}^{{n}}\times \{ a_{\mathbb{N}\setminus I_{n}} \}}\sum_{i=1}^{\infty}c_{\eps, p, i}
\left| \frac{\partial f}{\partial x_{{i}}} \right|^{q}\\ &
=\int_{\mathbb{T}^{\NN} } \sum_{i=1}^{\infty}c_{\eps, p, i} \left| \frac{\partial f}{\partial x_{{i}}} \right|^{q} \leq \frac{1}{2}.
\end{align*}
 Lemma \ref{Coordinate_Maurey} now implies  that for any $n \geq 1$,
\[
\mathbb{P} \left(\osc \left(f,\mathbb{T}^{{n}}\times \{ a_{\mathbb{N}\setminus I_{n}} \}\right)  <8\eps\right) \geq \frac{1}{2}.
\]
Write $B_{n}$ for the collection of all $b \in \TT^{\infty}$ for which $\osc(f,\mathbb{T}^{{n}}\times \{ b_{\mathbb{N}\setminus I_{n}} \})<8\eps$.
Obviously $B_{n+1}\subseteq B_{n}$, and by the above $\sigma (B_{n})\geq\frac{1}{2}$ for all $n \geq 1$.
Denoting $B=\cap_{n=1}^{\infty}B_{n}$, we have $\sigma (B)\geq\frac{1}{2}$. Note that
\begin{equation} B=\{ b \in \TT^\NN \, ; \, |f(x)-f(y)|<8\eps\mbox{ } \forall x,y\in F(b) \}.  \label{eq_340} \end{equation}
If (\ref{eq_1156})  does not hold, then there exist sets $C,D\subseteq \mathbb T^\NN$ of positive measure such that
for all pairs of points $c\in C$ and $d\in D$ one has $|f(c)-f(d)|\geq 8\eps$. Denote $$ \tilde C=\{x\in \mathbb T^\NN \, ; \, F(x)\cap C\neq \emptyset \} \qquad \text{and} \qquad
\tilde D=\{x\in \mathbb T^\NN \, ; \,  F(x)\cap D\neq \emptyset \}. $$ By Lemma \ref{lem:Kolmogorov}, we have $\sigma(\tilde C)=\sigma(\tilde D)=1$. Thus $\sigma(B\cap \tilde C\cap\tilde D)\geq\frac{1}{2}$, and there exist a point $b \in B$ and two elements $c\in C \cap F(b)$, $d\in D \cap F(b)$. According to the definition (\ref{eq_340}) of the set $B$,
$$ |f(c)-f(d)| < 8 \eps, $$ in contradiction.
\end{proof}

\begin{proposition} Let $p \in (1, \infty], 0 < \eps < 1/2$
and let $f:\mathbb{T}^{\infty}\to\mathbb{R}$
be a measurable function that is $1$-Lipschitz with respect to $\dist_p$.
Then there exists a parallel infinite-dimensional subtorus
$M\subseteq \mathbb{T}^{\mathbb{N}}$ such that the restriction $f|_M$ is measurable and $\essosc(f;M) \leq 8 \eps$.
\label{prop_1218} \end{proposition}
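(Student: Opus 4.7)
The plan is to deduce Proposition \ref{prop_1218} from Lemma \ref{lem_548} by restricting $f$ to a sufficiently sparse parallel subtorus so that the weighted integral hypothesis of the lemma holds. Being $1$-Lipschitz with respect to $\dist_p$ implies the pointwise bound $\sum_{i=1}^{\infty} |\partial_i f|^q \leq 1$ almost everywhere on $\TT^{\infty}$: indeed, on each one-dimensional fiber the relevant partial derivative exists a.e.\ by the one-variable Rademacher theorem, and the dual norm of the gradient of a $1$-Lipschitz function with respect to $\ell_p$ is bounded by $1$. Integrating, the quantities $u_i := \int_{\TT^{\infty}} |\partial_i f|^q \, d\sigma$ satisfy $\sum_{i=1}^{\infty} u_i \leq 1$, and in particular $u_i \to 0$.

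I would then inductively construct an infinite set $A = \{i_1 < i_2 < \ldots\} \subseteq \NN$ with
$$ c_{\eps, p, j} \cdot u_{i_j} \leq 2^{-j-2} \qquad \text{for every } j \geq 1, $$
which is possible precisely because $u_i \to 0$ while $c_{\eps, p, j}$ is a fixed finite scalar at each step. Applying Fubini to the product decomposition $\TT^{\infty} = \TT^A \times \TT^{\NN \setminus A}$ gives
$$ \int_{\TT^{\NN \setminus A}} \left( \int_{\TT^A \times \{b\}} \sum_{j=1}^{\infty} c_{\eps, p, j} \left|\frac{\partial f}{\partial x_{i_j}}\right|^q \right) db \;=\; \sum_{j=1}^{\infty} c_{\eps, p, j} u_{i_j} \;\leq\; \frac{1}{4}, $$
so by Markov's inequality the inner integral is $\leq 1/2$ on a set of $b \in \TT^{\NN \setminus A}$ of measure at least $1/2$. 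A second application of Fubini ensures that for a.e.\ $b$ the restriction $g_b := f|_{\TT^A \times \{b\}}$ is a well-defined measurable function on the subtorus.

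For any $b$ meeting both conditions, identifying $\TT^A$ with $\TT^{\NN}$ via the enumeration of $A$ turns $g_b$ into a measurable, $1$-Lipschitz (with respect to $\dist_p$) function whose $j$-th partial derivative agrees with $\partial_{x_{i_j}} f$ on the fiber; thus $g_b$ satisfies the hypothesis of Lemma \ref{lem_548}, and that lemma yields $\essosc(f; \TT^A \times \{b\}) < 8\eps$. Taking $M := \TT^A \times \{b\}$ finishes the argument. The main obstacle is the doubly-exponential growth of the coefficients, roughly $c_{\eps,p,j} \sim 2^{j^2}$, which forces $A$ to be extremely sparse; what makes the inductive choice feasible no matter how fast this growth is, is simply that $\sum u_i$ is summable, hence $u_i \to 0$, so at each step one can skip far enough ahead to find an index $i_j$ with $u_{i_j}$ smaller than any prescribed positive threshold.
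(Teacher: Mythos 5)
Your argument is correct and reduces the proposition to Lemma~\ref{lem_548} by the same strategy as the paper, but the construction of the sparse index set $A$ is genuinely different. The paper fixes a deterministic partition of $\NN$ into blocks $B_1, B_2, \ldots$ of size roughly $c_{\eps,p,n}\cdot 2^{n+1}$ (chosen before looking at $f$), samples one index $i_n$ uniformly from each block, and uses the pointwise block-wise bound $\sum_{i\in B_n}|\partial_i f|^q\leq 1$ on each fiber to control $\EE_{i_n}\int|\partial_{i_n}f|^q\leq 1/\#(B_n)$; the existence of a good $I$ and $b$ then follows from a single expectation estimate over both randomnesses. You instead integrate the pointwise inequality $\sum_i|\partial_i f|^q\leq 1$ to get $\sum_i u_i\leq 1$ with $u_i=\int|\partial_i f|^q$, observe $u_i\to 0$, and greedily select $i_j$ so small that $c_{\eps,p,j}u_{i_j}\leq 2^{-j-2}$; Fubini and Markov then isolate a good fiber $b$. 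Both routes are valid and of comparable length. Your approach is more elementary (no auxiliary random index selection) and makes transparent that summability of the $u_i$ is what drives the construction, at the cost of the chosen index set being $f$-dependent in a less structured way; the paper's random-block argument makes the block sizes explicit and independent of $f$, with the dependence on $f$ hidden in the final ``there exists.'' You handle the measurability of the restriction via Fubini exactly as the paper does implicitly.
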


\begin{proof}
Fix a partition of  $\mathbb{N}$ into blocks $B_{1}, B_2,\ldots \subseteq \mathbb{N}$
of size
\begin{equation}
\#(B_{n})= \left \lceil \frac{2^{(n-1)^{2}+q(n-1)+(n+1)}}{\omega_{n-1} \cdot \eps^{n-1+q}} \right \rceil \qquad \qquad \qquad (n=1,2,\ldots).
\label{eq_1052} \end{equation}
In each block, choose a random element $i_{n}\in B_{n}$, independently and uniformly.
Denote $I =\{i_{1},i_2,\ldots \} \subseteq \NN$.
Additionally,
let $a$ be a random point, distributed uniformly in $\TT^{\NN}$, independent of $I$.
As before, write $q = p / (p-1)$ with $q = 1$ in case $p = \infty$.  For every fixed $n$ and for every
$b\in\mathbb{T}^{\mathbb{N}\setminus B_{n}}$, the function
$f$ restricted to $\mathbb T^{B_n}\times\{b\}$ is $1$-Lipschitz with respect to $\dist_p$. By Rademacher's theorem, for almost any $x \in \mathbb T^{B_n}\times\{b\}$ one has
$$
 \sum_{i\in B_n}  \left| \frac{\partial f}{\partial x_i} (x) \right|^q \leq 1, \label{eq_1048}
 $$ implying that
\begin{equation}
\mathbb{E}_{i_{n}}\int_{\mathbb{T}^{B_{n}}\times \{ b \}} \left| \frac{\partial f}{\partial x_{i_{n}}} \right|^{q}\leq\frac{1}{\#(B_{n})}.
\label{eq_1053} \end{equation}
Denote by $a_{\mathbb{N}\setminus B_{n}}$ the restriction of the random point $a$ to the torus
$\mathbb{T}^{\mathbb{N}\setminus B_{n}}$.
From (\ref{eq_1052}) and (\ref{eq_1053}),
\[
\mathbb{E}_{i_{n}}\int_{\mathbb{T}^{\mathbb{N}}}\frac{2^{(n-1)^{2}+q(n-1)}}{\omega_{n-1, p} \cdot \eps^{n-1+q}} \left| \frac{\partial f}{\partial x_{i_{n}}} \right|^{q} = \mathbb{E}_{i_{n},a}\int_{\mathbb{T}^{B_{n}}\times \{ a|_{\mathbb{N}\setminus B_{n}} \}}\frac{2^{(n-1)^{2}+q(n-1)}}{\omega_{n-1, p} \cdot \eps^{n-1+q}} \left| \frac{\partial f}{\partial x_{i_{n}}} \right|^{q}
\leq \frac{1}{2^{n+1}}.
\]
Denote  $ c_{\eps, p, n} = \left. 2^{(n-1)^{2}+q(n-1)} \right/  \left( \omega_{n-1, p} \cdot \eps^{n-1+q} \right)$.
Then,
\[
\mathbb{E}_{I}\int_{\mathbb{T}^{\mathbb{N}}}\sum_{n=1}^{\infty}c_{\eps, p, n}
\left| \frac{\partial f}{\partial x_{i_{n}}} \right|^{q} \leq \sum_{n=1}^{\infty} \frac{1}{2^{n+1}} = \frac{1}{2}.
\]
That is,
\begin{equation}
\mathbb{E}_{I,a_{\mathbb N\setminus I}}\int_{\mathbb{T}^{I} \times \{ a_{\NN \setminus I} \} }\sum_{n=1}^{\infty}c_{\eps, p, n} \left| \frac{\partial f}{\partial x_{i_{n}}} \right|^{q} \leq \frac{1}{2}.
\label{eq_1132}
\end{equation}
In particular, there exists a subset $I = \{ i_1,i_2,\ldots \} \subseteq \NN$
and $b \in \TT^{\NN}$ such that
\begin{equation}
\int_{\mathbb{T}^{I} \times \{ b_{\NN \setminus I} \} }\sum_{n=1}^{\infty}c_{\eps, p, n}\left| \frac{\partial f}{\partial x_{i_{n}}} \right|^{q} \leq \frac{1}{2},
\label{eq_549} \end{equation}
and such that the restriction of $f$ to the subtorus $M := \mathbb{T}^{I} \times \{ b_{\NN \setminus I} \}$ is measurable.
We may apply Lemma \ref{lem_548} thanks to (\ref{eq_549}), and conclude that $\essosc(f;M) < 8 \eps$.
\end{proof}

\begin{proof}[Proof of Theorem \ref{thm2}] Normalizing, we may assume that $f$ is $1$-Lipschitz.
Fix a sequence $\eps_n\to 0$ and apply Proposition \ref{prop_1218} in order to
construct a decreasing sequence of infinite-dimensional parallel tori $T_n$ such that $\essosc(f;T_n)<\eps_n$. Denote $a_n=\int_{T_n}{f}$.
Then  for $m>n$ one has $|a_m-a_n|<\eps_n$, implying that $a_n$ has a limit, denoted by $a$. It then follows that $a \in \RR$ satisfies the conclusion of the theorem.
\end{proof}

{
}

\end{document}